\theoremstyle{plain}
\newtheorem{thm}{Theorem}
\newtheorem{lem}{Lemma}
\newtheorem{prop}{Proposition}
\newtheorem{cor}{Corollary}
\theoremstyle{definition}
\newtheorem{deff}{Definition}
\theoremstyle{remark}
\newtheorem{rem}{Remark}
\title{
\bf On some limit theorems following from Smith's Theorem}
\author{G.~A.~Zverkina}
\begin{document}
\selectlanguage{english}
\maketitle
\begin{abstract}
We prove an ergodic theorem for a linearwise Markov process, and we give estimation for the average of renewal process amount of overjump; these results are based on Smith's Key Renewal Theorem.

Also we discuss the possibility of using these results for studying some type of queueing systems.
\end{abstract}

\section{Introduction}
Smith's Key Renewal Theorem is the basis for the development of Renewal Theory.
This theorem implies many useful consequences, see, e.g., \cite{smith}, \cite{GBS}, \cite{GK}, etc.
Here we present some consequences of Smith's theorems useful for queueing theory.
We prove the ergodic theorem for linearwise Markov process, as well as we find an estimation of the expectation of renewal process amount of underjump (or backward renewal time), and we give an estimation for the expectation of renewal process amount of overjump (or forward renewal time).

This paper is organized as follows.

In Section 2 we recall the basic facts of renewal theory, and formulate Smith's Key Renewal Theorem in its most general form.

In Section 3 we recall the notion of linearwise (or lineartype) Markov process, formulate and prove the ergodic theorem for linearwise Markov process.

In Section 4 we give estimation for renewal process amount of overjump expectation.

In Section 5 we discuss the possibility of using the results of Sections 3-4 to prove the ergodic theorem concerning multidimensional piecewise-linear Markov processes.

\section{Smith's Key Renewal Theorem}
\subsection{Definitions}
\begin{deff} Renewal process is a \emph{random} increasing sequence $0<t_1<t_2< \ldots<t_n< \ldots$, where $t_{i+1}-t_i= \xi_i$ ($i \in \mathbb{N}$) are non-negative random variables with distribution function $F(s)$, and $t_1$ is non-negative random variable $ \xi_0$ with distribution function $F_1(s)$; random variables $ \xi_i$ are mutually independent.
The times $t_i$ are called {\it renewal times} (points) or {\it jump times} and the intervals $(t_i,t_{i+1})$ are called {\it renewal intervals}.
\end{deff}
\begin{rem}
Sometimes renewal process is defined differently, but the meaning of these definitions remains the same -- see, e.g., \cite{Cox}, \cite{GBS}, \cite{GK}.
\end{rem}
\begin{deff}
Let $N(t) \stackrel{ \text{ \rm def}}{=\!\!\!=} \max\{i:\;t_i<N\}$ be a number of renewals till a time $t$ (or number of jumps observed up to some time $t$). Let us denote $ H(t)\stackrel{\text{\rm def}}{=\!\!\!=} \mathbf{E}\,N(t)=F_1(t)+F_1 \ast \left(\sum \limits _{i=1}^ \infty F^{i \ast} \right)(t)$; $ \mathbf{E}\,N(t)=H(t)$ is called {\it renewal function} (here ``$\ast$'' is a symbol of convolution).
\end{deff}
\begin{deff}
A random variable is called {\it lattice} if all its possible values are in the set $\{a+nb,\;n \in \mathbb{Z}\}=a+b \mathbb{Z}$, $b\neq 0$.
\end{deff}
\begin{deff} A non-negative function $g(x):[0,+\infty)\to [0,+\infty)$ is called directly Riemann integrable if
$$
\lim\limits_{\Delta\downarrow 0}\Delta \sum \limits _{
n=1}^{ \infty} \sup \limits _{x \in[ \Delta(n-1), \Delta n]}g(x)=\lim\limits_{\Delta\downarrow 0} \Delta \sum \limits _{
n=1}^{ \infty} \inf \limits _{x \in[ \Delta(n-1), \Delta n]}g(x)\in(+\infty,+\infty).
$$
\end{deff}
\begin{deff}
Let the sequence $0=t_0<t_1<t_2<\cdots <t_i<t_{i+1}<\cdots$ be a renewal process.

For $t>0$ denote by $x^\ast_t$ the time elapsing from the instant $t$ to the next renewal, and by $x_t$ the time elapsed from the previous renewal to $t$. Thus, if $n$ is defined by the condition $t_n\leqslant t<t_{n+1}$, then $x^\ast_t = t_{n+1} - t$, $x(t)=t-t_n$.
The variable $x^\ast_t$ is called the amount of overjump (over the level $t$) and $x(t)$ is the amount of underjump (up to level $t$) -- see \cite[Ch. 2.7]{GK}.

In (\cite[\S 2.1]{Cox} or \cite[\S 2.1.1]{mitr}) the amount of overjump is called ``forward renewal time'' or ``residual life of the renewal interval'', and the amount of underjump is called ``backward renewal time'' or ``elapsed time of the renewal interval''.
\end{deff}
{\it In the sequel denote $ \overline{ \psi}(x) \stackrel{ \text{ \rm def}}{=\!\!\!=} 1- \psi(x)$ for all functions $ \psi(x)$.}
\subsection{ Smith's Key Renewal Theorem}
\begin{thm}[Key Renewal Theorem] \label{KeyTh}
Let $ \zeta$ be non-lattice random variable with distribution function $ \Phi(s) \stackrel{ \text{ \rm def}}{=\!\!\!=} \mathbf{P}\{ \zeta \leqslant s\}$, $ \mathbf{E}\, \zeta< \infty$, $H(t) \stackrel{ \text{ \rm def}}{=\!\!\!=} \sum \limits _{m=1}^ \infty \Phi^{m \ast}(t)$; let $b(s)$ be a directly Riemann integrable function on $[0, \infty)$.
Then \;
$$ \lim \limits _{t \to \infty} \int \limits _0^tb(t-s)\, \mathrm{d}H( s)= \frac{ \int \limits _0^ \infty b(s) \, \mathrm{d} s}{ \mathbf{E}\, \zeta^{ \phantom{1^1\!\!}}},
$$
see \cite[Theorem 35]{Serf}.
\end{thm}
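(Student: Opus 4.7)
The plan is to reduce the Key Renewal Theorem to Blackwell's renewal theorem --- which asserts that, for non-lattice $\zeta$ with $\mathbf{E}\,\zeta<\infty$, $H(t+h)-H(t)\to h/\mathbf{E}\,\zeta$ as $t\to\infty$ for every fixed $h>0$ --- and then to approximate the directly Riemann integrable function $b$ by step functions on a grid of mesh $\Delta$.

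First I would verify the conclusion on a single indicator. For $b=\mathbf{1}_{[0,h]}$ and $t\geqslant h$ one has
$$\int_0^t b(t-s)\,\mathrm{d}H(s)=H(t)-H(t-h),$$
and Blackwell's theorem delivers exactly $h/\mathbf{E}\,\zeta$, which coincides with $\int_0^\infty b(s)\,\mathrm{d}s\big/\mathbf{E}\,\zeta$. By linearity this extends at once to finite linear combinations $\sum_i c_i\mathbf{1}_{[a_i,b_i]}$, i.e.\ to compactly supported step functions on a regular grid.

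For general directly Riemann integrable $b$ and mesh $\Delta>0$, I would form the Darboux-type step functions $b^+_\Delta$ and $b^-_\Delta$ using the suprema and infima on the cells $[\Delta(n-1),\Delta n]$ that appear in the very definition of direct Riemann integrability; by hypothesis their $L^1$-norms sandwich $\int_0^\infty b(s)\,\mathrm{d}s$ and converge to it as $\Delta\downarrow 0$. Because these step functions are only countable sums, one needs a tail estimate uniform in $t$ in order to pass to the limit termwise in $\int_0^t b^{\pm}_\Delta(t-s)\,\mathrm{d}H(s)$; this is supplied by the elementary bound $\sup_t\bigl(H(t+\Delta)-H(t)\bigr)<\infty$, itself a consequence of Blackwell's theorem applied locally. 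Sandwiching $\int_0^t b(t-s)\,\mathrm{d}H(s)$ between the corresponding integrals of $b^-_\Delta$ and $b^+_\Delta$, taking $\limsup$ and $\liminf$ in $t$ via the already-treated step-function case, and finally letting $\Delta\downarrow 0$ yields the claim.

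The main obstacle is Blackwell's theorem itself in the non-lattice case, which carries the real content of the statement. I would prove it by Lindvall's coupling argument: on a common probability space construct two renewal sequences with the same inter-arrival law $\Phi$, one started at $0$ and the other in the stationary delayed regime with initial density $\overline{\Phi}(x)/\mathbf{E}\,\zeta$, and show that the two forward residual lifetimes can be made to coincide almost surely in finite time. The non-lattice hypothesis enters decisively at this stage, since it is what allows the difference of the two residuals to enter every neighbourhood of $0$; an alternative route is Feller's Fourier-analytic argument via the characteristic function of $\Phi$ combined with a Tauberian theorem, but the coupling approach keeps the proof fully probabilistic and uses only elementary inputs.
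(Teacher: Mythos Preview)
The paper does not supply its own proof of this statement: Theorem~\ref{KeyTh} is quoted as a classical background result, with an explicit reference to \cite[Theorem~35]{Serf} in lieu of any argument. So there is nothing in the paper to compare your proposal against.

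That said, your outline is the standard textbook proof of the Key Renewal Theorem (essentially the one in Serfozo, or in Durrett, or in Asmussen): reduce to Blackwell's theorem on indicators, use the definition of direct Riemann integrability to sandwich $b$ between step functions $b^{\pm}_\Delta$, invoke the uniform bound $\sup_t\bigl(H(t+\Delta)-H(t)\bigr)<\infty$ to justify the termwise limit, and finally derive Blackwell itself by the Lindvall coupling. The steps are correct; one small remark is that the bound $\sup_t\bigl(H(t+\Delta)-H(t)\bigr)<\infty$ is usually proved directly from the renewal equation or from subadditivity of $H$, rather than ``from Blackwell's theorem applied locally'' as you write --- deducing a uniform-in-$t$ bound from a limiting statement requires an extra argument, whereas the direct proof is shorter and avoids any appearance of circularity.
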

\begin{rem}
If $F(s)$ is a distribution function, and $\int\limits _0^\infty s\,\mathrm{d}  F(s)<\infty$, then $(1-F(s))$ is directly Riemann integrable on $[0,+\infty)$.
\end{rem}

Smith's Key Renewal Theorem is very important for the renewal theory: for example, the following very useful fact (\cite[\S 5.2] {Cox}, \cite[\S 2.7.6]{GBS} ) is a consequence of the Smith's theorem; see also \cite{smith}.

\begin{prop} \label{Prop}
Consider the renewal process $0=t_0<t_1<\cdots<t_i<\cdots$, and let $x_t$ be the amount of underjump, and $x^ \ast_t$ be the amount of overjump.

If $F(s)$ is non-lattice and $ \int \limits _0^ \infty s \, \mathrm{d} F(s)< \infty$, then
\begin{equation} \label{Zv3}
\lim \limits _{t \to \infty} \mathbf{P}\{x_t>a\} = \lim \limits _{t \to \infty} \mathbf{P}\{x_t^ \ast>a\} = \frac{ \int \limits _a^ \infty \overline{F}(s) \, \mathrm{d} s}{ \int \limits _0^ \infty \overline{F}(s) \, \mathrm{d} s}.
\end{equation}
\end{prop}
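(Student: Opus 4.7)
The plan is to represent both probabilities as convolutions of a directly Riemann integrable function with the renewal measure $\mathrm{d}H$ and then invoke Theorem~\ref{KeyTh}.

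\textbf{Overjump.} Conditioning on the first jump $\xi_0$ and iterating the resulting renewal equation gives the standard renewal representation
\begin{equation*}
\mathbf{P}\{x^{\ast}_{t} > a\} \;=\; \overline{F_1}(t+a) + \int_0^t \overline{F}(t+a-s)\,\mathrm{d}H(s).
\end{equation*}
Setting $g(v) := \overline{F}(v+a)$, the integral takes the form $\int_0^t g(t-s)\,\mathrm{d}H(s)$. Since $\int_0^\infty s\,\mathrm{d}F(s)<\infty$, the remark after Theorem~\ref{KeyTh} gives that $\overline{F}$, and hence its shift $g$, is directly Riemann integrable. Theorem~\ref{KeyTh} therefore yields
\begin{equation*}
\int_0^t g(t-s)\,\mathrm{d}H(s)\;\longrightarrow\;\frac{\int_0^\infty\overline{F}(v+a)\,\mathrm{d}v}{\mathbf{E}\,\zeta}\;=\;\frac{\int_a^\infty\overline{F}(s)\,\mathrm{d}s}{\int_0^\infty\overline{F}(s)\,\mathrm{d}s},
\end{equation*}
using $\mathbf{E}\,\zeta = \int_0^\infty\overline{F}(s)\,\mathrm{d}s$. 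The initial term $\overline{F_1}(t+a)$ tends to $0$ because $F_1$ is a proper distribution.

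\textbf{Underjump.} For $t > a$, the event $\{x_t > a\}$ requires the last renewal up to $t$ to lie in $[0,t-a]$ while the subsequent interval exceeds $t-s$, so that
\begin{equation*}
\mathbf{P}\{x_t > a\}\;=\;\overline{F_1}(t) + \int_0^{t-a} \overline{F}(t-s)\,\mathrm{d}H(s) \;=\; \overline{F_1}(t) + \int_0^t h(t-s)\,\mathrm{d}H(s),
\end{equation*}
where $h(v) := \overline{F}(v)\,\mathbf{1}_{\{v\ge a\}}$. The function $h$ is directly Riemann integrable (being dominated by $\overline{F}$, with only one additional discontinuity at $a$), so Theorem~\ref{KeyTh} yields the same limit as above, and $\overline{F_1}(t)\to 0$, completing the proof.

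The only genuine obstacle is verifying direct Riemann integrability of $g$ and $h$, which however follows immediately from the remark after Theorem~\ref{KeyTh}. A secondary technical point is the presence of the distinct initial distribution $F_1$, which introduces the boundary terms $\overline{F_1}(t+a)$ and $\overline{F_1}(t)$; since these vanish as $t\to\infty$, the limiting distribution depends only on $F$, as stated.
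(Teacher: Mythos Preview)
Your proof is correct and follows essentially the approach the paper itself indicates: the paper does not give a self-contained proof of Proposition~\ref{Prop} but points (in the proof of Lemma~\ref{lem1}) to the very renewal representation~\eqref{Zv55} that you derive for the overjump, and carries out the analogous computation in step~6 of the proof of Theorem~\ref{Th2}. The only stylistic difference is that there the paper treats $\{x_t>a\}$ and $\{x_t^\ast>b\}$ jointly via the single identity~\eqref{Zv8} and then specializes, whereas you handle the two tails separately with the auxiliary functions $g$ and $h$; both routes are equivalent. One small technical remark: Theorem~\ref{KeyTh} as stated uses the \emph{pure} renewal function $\sum_{m\ge1}\Phi^{m\ast}$, while your formulas involve the delayed renewal function $H$ of Definition~2 (with first interval $F_1$); you may want to add a line noting that the Key Renewal Theorem extends to the delayed case with the same limit, or else reduce to the pure case by conditioning on $t_1$.
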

\begin{cor}\label {1}
Hence,
$$
\lim \limits _{t \to \infty} \mathbf{E}\,x_t = \lim \limits _{t \to \infty} \mathbf{E} \,x_t^ \ast = {\displaystyle\int \limits _0^ \infty} \frac{ \int \limits _a^ \infty \overline{F}(s) \, \mathrm{d} s}{ \int \limits _0^ \infty \overline{F}(s) \, \mathrm{d} s} \, \mathrm{d} a= \frac{ \mathbf{E}\, \zeta^2}{2\, \mathbf{E}\, \zeta}.
$$
\end{cor}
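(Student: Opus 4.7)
The plan is to proceed in three steps, all resting on Proposition~\ref{Prop}. First, I would write each expectation via the standard tail-integral identity for non-negative random variables, namely $\mathbf{E}\,x_t=\int_0^\infty \mathbf{P}\{x_t>a\}\,\mathrm{d}a$ and similarly for $x_t^\ast$. Second, I would push the limit inside the integral, so that the pointwise limit given in~\eqref{Zv3} yields
$$
\lim_{t\to\infty}\mathbf{E}\,x_t=\lim_{t\to\infty}\mathbf{E}\,x_t^\ast=\int_0^\infty\frac{\int_a^\infty \overline{F}(s)\,\mathrm{d}s}{\int_0^\infty \overline{F}(s)\,\mathrm{d}s}\,\mathrm{d}a.
$$
Third, I would evaluate this iterated integral by Fubini's theorem: swapping the order of integration in the numerator gives $\int_0^\infty\int_0^s \overline{F}(s)\,\mathrm{d}a\,\mathrm{d}s=\int_0^\infty s\,\overline{F}(s)\,\mathrm{d}s=\tfrac{1}{2}\mathbf{E}\,\zeta^2$, while the standard identity $\int_0^\infty \overline{F}(s)\,\mathrm{d}s=\mathbf{E}\,\zeta$ handles the denominator. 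Combining the two ratios produces the claimed value $\mathbf{E}\,\zeta^2/(2\,\mathbf{E}\,\zeta)$.

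The only delicate point I expect to encounter is the interchange of limit and integral in the second step: Proposition~\ref{Prop} delivers convergence of $\mathbf{P}\{x_t>a\}$ for each fixed $a$, but one needs convergence of the $a$-integral. Implicitly this requires $\mathbf{E}\,\zeta^2<\infty$; otherwise the corollary must be read as both sides equalling $+\infty$, which itself follows from Fatou's lemma applied to the tail integral. Under $\mathbf{E}\,\zeta^2<\infty$, I would justify the interchange by dominated convergence after producing a $t$-uniform tail bound on $\mathbf{P}\{x_t^\ast>a\}$, obtainable from the renewal-equation representation $\mathbf{P}\{x_t^\ast>a\}=\overline{F}(t+a)+\int_0^t \overline{F}(t+a-s)\,\mathrm{d}H(s)$ combined with elementary estimates on the renewal function $H$; for $x_t$ the bound $x_t\leqslant t\wedge \zeta_{N(t)+1}$ together with a uniform control on $\mathbf{E}\,\zeta_{N(t)+1}$ supplies the required majorant. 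Once an $a$-integrable majorant is in hand, the two limits coincide with the common ratio above, and the Fubini computation closes the argument.
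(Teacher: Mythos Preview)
Your proposal is correct and follows the same route the paper implies: the paper offers no detailed proof of this corollary at all, writing only ``Hence,'' immediately after Proposition~\ref{Prop}, so the intended argument is exactly the tail-integral identity plus~\eqref{Zv3} plus the Fubini computation you spell out.

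The one place where you do more than the paper is the interchange of limit and integral, and here it is worth noting that the paper itself supplies a cleaner tool later on: in Lemma~\ref{lem1} it is shown from the renewal-equation representation that $R(s,t)=\mathbf{P}\{x_t^\ast>s\}$ is \emph{nondecreasing} in $t$ (inequality~\eqref{ner}). This monotonicity lets the limit pass inside $\int_0^\infty R(s,t)\,\mathrm{d}s$ by the monotone convergence theorem, with no need for the $t$-uniform integrable majorant you propose to construct via dominated convergence and renewal-function estimates. Your route works, but the monotonicity argument is both shorter and avoids any appeal to bounds on $H$; it also handles the case $\mathbf{E}\,\zeta^2=\infty$ automatically, since MCT allows the common value $+\infty$. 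For $x_t$ the analogous monotonicity is asserted in the remark following Lemma~\ref{lem1}.
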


Since processes describing the behaviour of queueing systems are often a combination of renewal processes {\it in some sense}, and asymptotic behaviour of these processes is very important, then renewal theory and, in particular, some facts similar to Proposition \ref{Prop} and Corollary \ref{1} are very significant.

For example, for studying single server queueing system behaviour, we can use {\it lineartype} or {\it linearwise} Markov processes (\cite{GK}) with state space $ \mathscr{X}=\{ \mathbb{Z}_+ \times \mathbb{R}_+\}$.
Linearwise process has two components.
The first component is equal to a quantity of customers in queueing system, and the second component is equal to the time elapsed from last change of the first component: $X_t=(n_t,x_t)$; let $ F_k (s) $ be a distribution function of time intervals when the process $(X_t,\,t\geqslant 0)$ is in the set $ \{n_t = k\}$, $k\in \mathbb{Z}_+$.

Therefore, the following conjecture seems natural: {\it if the distribution of a linearwise Markov process $(X_t,\,t\geqslant 0)$ (weakly) converges to stationary distribution as $ t \to \infty $, and if $\int\limits _0^\infty s \,\mathrm{d}  F_k (s)<\infty $ for all $k\in \mathbb{Z}_+$, then stationary distribution of the process $(X_t,\,t\geqslant 0)$ can be described by formulas similar to the formula (\ref{Zv3})}.

However, usually description of multichannel (parallel servers) queueing system behaviour in terms of linearwise process is impossible: in this case state space of the process describing queueing system behaviour has a more complex structure.
Nevertheless, B.A.~Sevastyanov (see \cite{Seva57}, \cite{Seva58}) proved an ergodic theorem for multichannel queueing system $M|G|n|0$ (Kendall's notation -- queueing system consists of $n$ servers, arrival flow is Poisson with parameter $\lambda$, independent service times have an arbitrary distribution function $G(s)$, and buffer size equals to zero).
B.A.~Sevastyanov defined the form of stationary distribution; this distribution is described by formulas similar to (\ref{Zv3}); see also \cite{Fortet}.

In some situations when service process has a stationary distribution, it is impossible to find any precise formula for this distribution: it is possible to find only an {\it estimate} for this stationary distribution.
However, corresponding estimate can be useful not only for solving practical problems related to optimization of queueing system work.
Estimation of stationary distribution also can be used to estimate convergence rate to stationary distribution for the distribution of process describing the behaviour of queueing system.

Below we give some facts, which can be used to obtain estimates of stationary distribution. Also these facts can be used for estimating the convergence rate of some queueing process distribution to this stationary distribution.

For example, in \cite{verezver} a strict (not only qualitative) estimate of convergence rate for availability factor has been obtained with the help of Theorem \ref{Th2} and Lemma \ref{lem1}.

These facts (Theorem \ref{Th2} and Lemma \ref{lem1}) seem quite natural, they are frequently mentioned in various publications, but the author did not see their complete proof. Therefore, the author propose her own proof of these ``folk'' propositions.

\section{Linearwise process}
\subsection{Definition of (one-dimensional) linearwise Markov process}

Linearwise (or lineartype) processes are very useful in many problems of queueing theory.
The fact is that the description of queueing system behaviour must contain a discrete component: for example, this discrete component can be a number of customers in the queueing system.
And time intervals between changes of these discrete components are random variables.
If these random variables have an exponential distribution, then the behaviour of queueing system is described by {\it continuous-time Markov chain.

But we refuse the condition of exponential distribution of time between state changes of the embedded Markov chain.
Thus, the time between changes of the discrete components of queueing process can be distributed {\it arbitrarily}.

Let us formalize the above.}

Let $ \mathfrak{P}=\|p_{i,j}\|$, $i,j \in \mathfrak{N} \subseteq \mathbb{Z}$ be the transition matrix, and let $ \Phi_k(s)= \mathbf{P}\{ \zeta_k \leqslant s\}$ be the distribution functions such that $ \Phi_k(0+)=0$, $k \in \mathfrak{N}$.
\begin{rem} If $ \Phi_k(0+)>0$, then we can modify the matrix $ \mathfrak{P}$ and the set $ \mathfrak{N}$ in such a way that replacement of the function $ \Phi_k(x)$ by the function $\displaystyle \frac{ \Phi_k(x)- \Phi_k(0+)}{1- \Phi_k(0+)}$ does not change the behaviour of the corresponding stochastic process (see below).
\end{rem}

Let for all $m \in \mathbb{N}$ and for all $k\in\mathfrak{N}$, $\zeta_k^{(m)} \stackrel{ \mathscr{D}}{=} \zeta_k$ be identically distributed random variables, and for $a \geqslant 0$ let $ \zeta_{k,\,a}$ be a random variable with the distribution function
$$
\mathbf{P} \{\zeta_{k,\,a}\leqslant s\}\stackrel{\text{\rm def}}{=\!\!\!=} \Phi_{k,\,a}(s)=1- \frac{ \overline{ \Phi}_k(s+a) \phantom{1^{1^1}\!\!\!\!}}{ \overline{ \Phi}_k(a)}= \mathbf{P}\{ \zeta_k \leqslant s+a| \zeta_k>a\};
$$
It is evident that $\zeta_{k,0}$
$\stackrel{{\mathscr D}}{=} \zeta_k$ (the designation ``$\stackrel{{\mathscr D}}{=}$'' means the same distribution).

If $ \Phi_k(a)=1$, then we assume $ \frac{ \overline{ \Phi}_k(s+a)}{ \overline{ \Phi}_k(a)}=0$.

Let all random variables $ \zeta_k^{(m)}$, $ \zeta_{k,\,a}$ be mutually independent ($k \in \mathfrak{N}$, $m \in \mathbb{Z}_+$, $a \in \mathbb{R}_+$); superscript of random variable $ \zeta_k ^ {(m)} $ indicates that this is $m$-th exemplar of random variable $ \zeta_k $; as already mentioned, $ \zeta_k^{(m)} \stackrel {\mathscr {D}} {=} \zeta_k $.

Denote $ \mathscr{X}=\{(n,x)\}= \mathfrak{N} \times \mathbb{R}_+$; let the set $ \mathscr{X}$ be equipped with standard Borel $ \sigma$-algebra $ \mathscr{B}(\mathscr{X})$.
The set $ \mathscr{X}=\{(n,x)\}$ is the state space of linearwise Markov process.
Let $S_i\stackrel{\text{\rm def}}{=\!\!\!=}\{(i, \cdot)\}\in \mathscr{B}(\mathscr{X})$ be called $i$-th level.

Consider  a Markov chain $ \mathcal{M}_k^{(r)}$ starting from the state $r$ (i.e. $ \mathcal{M}_0^{(r)}=r$).
Let transition matrix of $ \mathcal{M}_k^{(r)}$ be $ \mathfrak{P}$.
Again we suppose that $ \mathcal{M}_k^{(r)}$ and random variables $ \zeta_k^{(m)}$, $ \zeta_{k,a}$ are mutually independent.

Let us fix an arbitrary pair $(n_0,x_0) \in \mathscr{X}$ and denote
\begin{equation} \label{Zv4}
\begin{array}{c}
t_0 \stackrel{ \text{ \rm def}}{=\!\!\!=}0,\;\;\;
t_1 \stackrel{ \text{ \rm def}}{=\!\!\!=} \zeta_{n_0,x_0},\;\;\;
t_k \stackrel{ \text{ \rm def}}{=\!\!\!=} \zeta_{n_0,x_0}+ \sum \limits _{i=1}^k \zeta_{ \mathcal{M}_i^{(n_0)}}^{(i)}, \;\;\;
\nu_t \stackrel{ \text{ \rm def}}{=\!\!\!=} \max\{i:\;\;t_{i} \leqslant t\},
\\
\boxed{n_t \stackrel{ \text{ \rm def}}{=\!\!\!=} \mathcal{M}_{ \nu_t}^{(n_0)},\;\;\; x_t \stackrel{ \text{ \rm def}}{=\!\!\!=} t-t_{ \nu_t}.} \qquad \qquad
\end{array}
\end{equation}

\begin{deff}
{\it Linearwise Markov process with initial state} $X_0=(n_0,x_0)$ is a stochastic process $\big(X_t=(n_t,x_t),\,t\geqslant0\big)$ defined by formula (\ref{Zv4}); its values are in the state space $ \mathscr{X}$.
\end{deff}

{\it We skip the proof that the defined above process $\big(X_t,\,t\geqslant0\big)$ is a Markov.}

Denote $n(X_t) \stackrel{ \text{ \rm def}}{=\!\!\!=} n_t$, $x(X_t) \stackrel{ \text{ \rm def}}{=\!\!\!=} x_t$.
Here $n_t$ is called the {\it level $S_{n_t}$ where the process $\big(X_t,\,t\geqslant 0\big)$ is located at the time} $t$, $\nu_t$ is a moment of last change of process $(X_t,\,t\geqslant 0)$ level: $\nu_t\stackrel{\text{\rm def}}{=\!\!\!=}\sup\{s<t:\,n_s\neq n_t\}$.
And $x_t=(t-\sup\{s<t:\,n_s\neq n_t\})$ {\it is elapsed time of continuous being of process $\big(X_t,\,t\geqslant 0\big)$ in the level $n_t$ before the time $t$}; the times $t_k$ are the {\it times of pair $(n_t,x_t)=X_t$ change}, or {\it jumps } of the process $\big(X_t,\,t\geqslant 0\big)$; note that $x_{t_k+}=0$.

Construction (\ref{Zv4}) implies that at initial time $t=0$, $X_0=(n_0,x_0)$, i.e. the process $\big(X_t,\,t\geqslant 0\big)$ starts from the state $(n_0,x_0)$.

After the time $t=0$, through the time $ \zeta_{n_0,\,x_0}$, the process $\big(X_t,\,t\geqslant 0\big)$ gets into state $(i,0)\in S_i$ with probability $p_{n_0,\,i}$.
Then the process $\big(X_t,\,t\geqslant 0\big)$ stays in $S_i$ in the time $ \zeta_i^{(1)}$; then, at the time $( \zeta_{n_0,\,x_0}+ \zeta_i^{(1)}) $ the process $\big(X_t,\,t\geqslant 0\big)$ gets into state $(j,0)\in S_j$ with probability $p_{i,j}$, when it stays in the time $ \zeta_j^{(2)}$, etc.

We emphasize that random variable $ \zeta_ {n_0, \, x_0} $ is a residual time of the process $ X_t $ stay in the level $ S_ {n_0} $, under the condition that until time $ t = 0 $ the process $ X_t $ located {\it continuously} in the set $ S_{n_0} $ during the time $ x_0$.
We can interpret this situation as the start of process $\big(X_t,\,t\geqslant \Theta\big)$, $\Theta<0$, monitoring at the time $t=0$, and the process $\big(X_t,\,t\geqslant 0\big)$ is a sequel of process $\big(X_t,\,t\geqslant \Theta\big)$, $\Theta<0$, started in some previous time $\Theta$.

As already mentioned, linearwise processes $\big(X_t,\,t\geqslant 0\big)$ often occur in the study of queueing systems.
\begin{rem}
\emph {Usually} for embedded Markov chain $ \mathcal{M}_k^{(r)}$ of linearwise process $\big(X_t,\,t\geqslant 0\big)$ we have $p_{i,j}=0$ if $|i-j| \neq 1$.
This fact makes the study of the process $\big(X_t,\,t\geqslant 0\big)$ behaviour easier.
\end{rem}

\subsection{Ergodic Theorem for linearwise Markov process}
\subsubsection{Conditions}
\begin{enumerate}
\item[ \textbf{1.}] {\it Condition for Markov chain $ \mathcal{M}_n$:}

$ \forall i,j \in \mathfrak{N}$ \;
$ \mathbf{P} \left\{ \lim \limits _{k \to \infty} \frac{ \sum \limits _{m=1}^k \mathbf{1} \left\{ \mathcal{M}_m^{(j)}=i \right\}}{k}=p_i \right\}=1$; $ \sum \limits _{i \in \mathfrak{N}}p_i=1$;
\\
let $ \mathfrak{N}_s \stackrel{ \text{ \rm def}}{=\!\!\!=}\{i \in \mathfrak{N}:\;p_i>0\}$ be a set of essential states of the Markov chain $ \mathcal{M}_n$.

\item[ \textbf{2.}] Conditions for $ \Phi_i(s)$:
\begin{enumerate}
\item[ \qquad \emph{a.}] $ \mathbf{E}\, \zeta_{i}=T_i< \infty$ for all $i \in \mathfrak{N}$;
\item[ \qquad \emph{b.}] $ \sum \limits _{i \in \mathfrak{N}}p_iT_i= \sum \limits _{i \in \mathfrak{N}_s}p_iT_i=T< \infty$; \;

\item[ \qquad \emph{c.}] $ \mathbf{supp\,}(X_t) \stackrel{ \text{ \rm def}}{=\!\!\!=} \bigcup \limits_{{i \in \mathfrak{N}_s}} \mathbf{supp\,} (\mathscr{L}( \zeta_i)) \not \subseteq \{a+b \mathbb{Z}\} \stackrel{ \text{ \rm def}}{=\!\!\!=}\{a+bn,n \in \mathbb{Z}\}$ for all $ a,b \in \mathbb{R}$, where $ \mathbf{supp\,}(\mathscr{L}(\zeta_i))$ is a support of random variable $ \zeta_i$ distribution.
\end{enumerate}
\end{enumerate}
\begin{rem}~
\begin{enumerate}
\item Condition \textbf{2.}\emph{c.} implies $ \sum \limits _{i \in \mathfrak{N}} \mathbf{D}\, \zeta_i>0 $.
This ensures that the process $\big(X_t,\,t\geqslant 0\big)$ is \emph {stochastic}.

{\it We do not assume that} $\mathsf E  (\zeta_i)^2<\infty$.
\item Usually, for arbitrary Markov chain $ \mathcal{M}_n$ the verification of condition \textbf{1.\,} is difficult or even impossible.

However, as a rule, the condition \textbf{1.\,} can be verified for queueing system.

\emph{Usually}, we can provide upper bounds for the values $p_i$ for queueing system linearwise process; as a rule, this estimation is enough to estimate the stationary distribution.
\item {\it Usually}, for many queueing systems checking the condition \textbf{2.}\emph{b.} is not difficult.
\end{enumerate}
\end{rem}
\subsubsection{Theorem \ref{Th2}}
\begin{thm} \label{Th2}
~

\noindent \textbf{1.} Conditions 1--2 are necessary and sufficient for the existence of stationary distribution of the process $\big(X_t,\,t\geqslant 0\big)$.

\noindent \textbf{2.} Conditions 1--2 are necessary and sufficient for weak convergence of process $\big(X_t,\,t\geqslant 0\big)$ distribution to this stationary distribution.

\noindent \textbf{3.} Also, for stationary distribution $\mathscr{P}$ of the process $\big(X_t,\,t\geqslant 0\big)$ we have:
\begin{equation} \label{Zv6}
\mathscr{P}\{\widetilde n=i,\;\widetilde x> a\;\&\;\widetilde x^ \ast>b\}= \lim \limits _{t \to \infty} \mathbf{P}\{n_t=i,\;x_t> a\;\&\;x^ \ast_t>b\}= \frac{p_i}{T} \int \limits _{a+b}^ \infty \overline{ \Phi}_i(u) \, \mathrm{d} u,
\end{equation}
where $x_t^ \ast \stackrel{ \text{ \rm def}}{=\!\!\!=} \left(\inf\{s>t:\;n_s \neq n_t\}-t \right)$ is {\it amount of overjump}, or residual time of process $\big(X_t,\,t\geqslant 0\big)$ continuous being in the level $S_{n_t}$ after the time $t$, $i \in \mathfrak{N}$.
\end{thm}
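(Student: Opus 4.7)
The plan is to reduce the multidimensional problem, separately for each essential level $i\in\mathfrak{N}_s$, to a one-dimensional renewal problem, and then apply Theorem~\ref{KeyTh}.

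\emph{Renewal structure.} Fix $i\in\mathfrak{N}_s$ and let $\{\tau_k^{(i)}\}_{k\geqslant 0}$ be the consecutive times of entry of $\big(X_t,\,t\geqslant 0\big)$ into the level $S_i$. By the strong Markov property of the embedded chain $\mathcal{M}_n$ together with the mutual independence of the sojourn times $\zeta_j^{(m)}$, the inter-entry intervals $R_i\stackrel{\text{def}}{=}\tau_{k+1}^{(i)}-\tau_k^{(i)}$ are i.i.d.\ (after an initial delay depending on $(n_0,x_0)$, which is asymptotically negligible). A Markov renewal-reward computation based on Condition~\textbf{1.} gives $\mathbf{E}\,R_i=T/p_i$: the mean number of visits of $\mathcal{M}_n$ to any $j\in\mathfrak{N}_s$ during one $i$-excursion is $p_j/p_i$ (Kac-type formula), each such visit contributes an expected duration $T_j$, and summing yields $\sum_j p_j T_j/p_i = T/p_i$ by Conditions~\textbf{2.}\emph{a., b.} Condition~\textbf{2.}\emph{c.}\ ensures that $R_i$ is non-lattice, since $R_i$ is a random convolution of the $\zeta_j$'s whose pooled support lies in no single $a+b\mathbb{Z}$.

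\emph{Renewal equation and Smith's Key Renewal Theorem.} If $\tau$ denotes the most recent entry into $S_i$ strictly before $t$, then the event $\{n_t=i,\,x_t>a,\,x_t^*>b\}$ coincides with $\{t-\tau>a\}\cap\{\zeta_i>(t-\tau)+b\}$, where $\zeta_i$ is the independent sojourn duration at level $i$ starting at $\tau$. Conditioning on $\tau$ and writing $H_i$ for the renewal function of $\{\tau_k^{(i)}\}$ gives
\begin{equation*}
\mathbf{P}\{n_t=i,\,x_t>a,\,x_t^*>b\}=\int_a^{t}\overline{\Phi}_i(u+b)\,\mathrm{d}H_i(t-u).
\end{equation*}
The integrand $u\mapsto\overline{\Phi}_i(u+b)\mathbf{1}\{u>a\}$ is directly Riemann integrable, being non-increasing with finite integral bounded by $T_i$. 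Theorem~\ref{KeyTh} then yields
\begin{equation*}
\lim_{t\to\infty}\mathbf{P}\{n_t=i,\,x_t>a,\,x_t^*>b\}=\frac{1}{\mathbf{E}\,R_i}\int_a^\infty\overline{\Phi}_i(u+b)\,\mathrm{d}u=\frac{p_i}{T}\int_{a+b}^\infty\overline{\Phi}_i(v)\,\mathrm{d}v,
\end{equation*}
which is (\ref{Zv6}). Summing over $i$ at $a=b=0$ produces $\sum_i p_i T_i/T=1$ by Condition~\textbf{2.}\emph{b.}, so the limit $\mathscr{P}$ is a bona fide probability measure; being the weak limit of $\mathrm{Law}(X_t)$ under the Markov semigroup, it is automatically stationary.

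\emph{Necessity and main obstacle.} Necessity is handled contrapositively: failure of \textbf{2.}\emph{a.}\ or \textbf{2.}\emph{b.}\ forces $\mathbf{E}\,R_i=\infty$, yielding null recurrence with no finite invariant mass on $S_i$; failure of \textbf{1.}\ destroys the long-run chain frequencies $p_i$ required to aggregate the level-wise renewal data; failure of \textbf{2.}\emph{c.}\ leaves $R_i$ lattice, permitting only convergence along a lattice subsequence. The main technical obstacle is precisely this non-lattice claim for $R_i$: Condition~\textbf{2.}\emph{c.}\ is a \emph{joint} aperiodicity condition on the family $\{\mathscr{L}(\zeta_j)\}_{j\in\mathfrak{N}_s}$, not on any single member, so aperiodicity must be propagated through a random excursion sum. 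The standard route is to observe that if any single $\zeta_j$ is non-lattice, or if two summand distributions are concentrated on incompatible lattices, then their convolution is already non-lattice, and this property is inherited by every $R_i$ whose excursion almost surely contains such a summand pair.
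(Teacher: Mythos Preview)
Your approach is essentially the paper's: fix an essential level $i$, build a renewal sequence from the successive entries of $(X_t)$ into the state $(i,0)$, express $\mathbf{P}\{n_t=i,\,x_t>a,\,x_t^\ast>b\}$ as a renewal-type convolution against $\overline{\Phi}_i$, apply Theorem~\ref{KeyTh}, and absorb an arbitrary initial state into the (a.s.\ finite) first-passage delay. The one substantive difference is how the mean cycle length is identified. You compute $\mathbf{E}\,R_i=T/p_i$ \emph{directly} from the Kac occupation identity for the embedded chain (expected visits to $j$ per $i$-excursion equal $p_j/p_i$, whence $\mathbf{E}\,R_i=\sum_j(p_j/p_i)T_j$). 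The paper instead first records the semi-Markov fact $\lim_{t\to\infty}\mathbf{P}\{n_t=k\}=p_kT_k/T$ as a separate input (its equation~(\ref{Zv7})), then obtains $\lim_{t\to\infty}\mathbf{P}\{n_t=k\}=T_k/\mathbf{E}\,\hat{\theta}_k$ from Smith's theorem at $a=b=0$, and solves $\mathbf{E}\,\hat{\theta}_k=T/p_k$ by matching the two. Your route is more self-contained and avoids quoting~(\ref{Zv7}); the paper's route avoids the excursion-occupation formula at the price of importing~(\ref{Zv7}). On the non-lattice step the paper is slightly more explicit than you: it singles out $q_1,q_2\in\mathfrak{N}_s$ with $\zeta_{q_1}+\zeta_{q_2}$ non-lattice (guaranteed by Condition~\textbf{2.}\emph{c.}) and observes that any $i$-excursion visits both states with positive probability, which is precisely the ``incompatible-lattice summand pair'' argument you sketch in your last paragraph.
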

\begin{rem}
The distribution given in (\ref{Zv6}) is stationary for the case, where $ \mathbf{supp\,}(X_t) \subseteq\{a+b \mathbb{Z}\}$ (naturally, with slight modifications), but now we do not have sufficiently short and easy to read proof of this fact.
\end{rem}
\subsubsection{Proof of Theorem \ref{Th2}}
\begin{proof} [Proof]~

\noindent \textbf{1.} The necessity of conditions \textbf {1.} and \textbf {2.} is evident.

\noindent \textbf{2.} Condition \textbf{1.} implies existing of single indecomposable class of Markov chain $ \mathcal{M}_n$. This indecomposable class is a set $ \mathfrak{N}_s$ of essential states of Markov chain $ \mathcal{M}_n$.
This indecomposable class $ \mathfrak{N}_s \subseteq \mathfrak{N}$ is ergodic or periodic.

Naturally, in the sequel we will consider essential states of Markov chain $ \mathcal{M}_n$ only.

\noindent \textbf{3.} Conditions \textbf{1.}, \textbf{2.}\emph{a.} and \textbf{2.}\emph{b.} imply
\begin{equation} \label{Zv7}
\lim \limits _{t \to \infty} \mathbf{P}\{n_t=k\}= \frac{p_kT_k}{T}.
\end{equation}

\noindent \textbf{4.} Conditions \textbf{1.} and \textbf{2.}\emph{c.} imply:
there exist $q_1,q_2 \in \mathfrak{N}_s$ such that random variable $ \zeta_{q_1}+ \zeta_{q_2}$ is non-lattice.

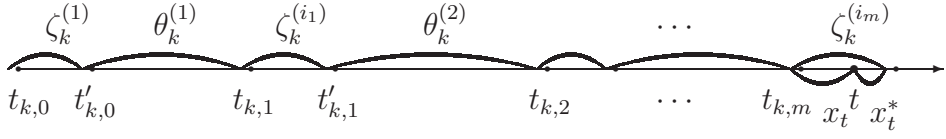
\begin{figure}[h]
\begin{center}
\begin{picture}(360,40)
\qbezier(0,20)(14,30)(28,20)
\qbezier(0,20)(14,31)(28,20)
\qbezier(0,20)(14,32)(28,20)
\qbezier(28,20)(58,30)(88,20)
\qbezier(28,20)(58,31)(88,20)
\qbezier(28,20)(58,32)(88,20)
\qbezier(88,20)(104,30)(120,20)
\qbezier(88,20)(104,31)(120,20)
\qbezier(88,20)(104,32)(120,20)
\qbezier(120,20)(160,30)(200,20)
\qbezier(120,20)(160,31)(200,20)
\qbezier(120,20)(160,32)(200,20)
\qbezier(200,20)(213,30)(226,20)
\qbezier(200,20)(213,31)(226,20)
\qbezier(200,20)(213,32)(226,20)
\qbezier(226,20)(261,30)(296,20)
\qbezier(226,20)(261,31)(296,20)
\qbezier(226,20)(261,32)(296,20)
\qbezier(296,20)(308,10)(320,20)
\qbezier(296,20)(308,9)(320,20)
\qbezier(296,20)(308,8)(320,20)
\qbezier(320,20)(326,10)(332,20)
\qbezier(320,20)(326,9)(332,20)
\qbezier(320,20)(326,8)(332,20)
\qbezier(296,20)(314,30)(332,20)
\qbezier(296,20)(314,31)(332,20)
\qbezier(296,20)(314,32)(332,20)
\put(0,20){ \vector(1,0){350}}
\put(0,20){ \circle*{2}}
\put(0,5){$t_{k,0}$}
\put(14,33){$ \zeta_k^{(1)}$}
\put(28,20){ \circle*{2}}
\put(25,5){$t_{k,0}'$}
\put(55,33){$ \theta_k^{(1)}$}
\put(88,20){ \circle*{2}}
\put(85,5){$t_{k,1}$}
\put(101,33){$ \zeta_k^{(i_1)}$}
\put(120,20){ \circle*{2}}
\put(117,5){$t_{k,1}'$}
\put(157,33){$ \theta_k^{(2)}$}
\put(200,20){ \circle*{2}}
\put(197,5){$t_{k,2}$}
\put(245,33){$ \cdots$}
\put(226,20){ \circle*{2}}
\put(245,5){$ \cdots$}
\put(296,20){ \circle*{2}}
\put(308,0){$x_t$}
\put(285,5){$t_{k,m}$}
\put(326,0){$x_t^ \ast$}
\put(311,33){$ \zeta_k^{(i_{m})}$}
\put(332,20){ \circle*{2}}
\put(318,5){$t$}
\put(316,20){ \circle*{3}}
\end{picture}
\caption{Times of stay of the process $\big(X_t,\,t\geqslant 0\big)$ in the level $S_k$ and out of the level $S_k$.}
\end{center}
\end{figure}

\noindent \textbf{5.} Let us fix an arbitrary $k \in \mathfrak{N}_s$ and (at the beginning of proof) let $X_0=(k,0)$.
Denote
$$
\begin{array}{l}
t_{k,0} \stackrel{ \text{ \rm def}}{=\!\!\!=}0, \quad t_{k,\,0}' \stackrel{ \text{ \rm def}}{=\!\!\!=} \inf\{t \geqslant t_{k,\,0}:\,n_t \neq k\};
\\
t_{k,j} \stackrel{ \text{ \rm def}}{=\!\!\!=} \inf\{t \geqslant t_{k,j-1}':\, n_t=k\}, \quad t_{k,j}' \stackrel{ \text{ \rm def}}{=\!\!\!=} \inf\{t \geqslant t_{k,j}:\,n_t \neq k\};
\\
\theta_k^{(j)} \stackrel{ \text{ \rm def}}{=\!\!\!=} t_{k,j}-t_{k,j-1}', \quad \zeta_k^{i_j}=t'_{k,j}-t_{k,j}.
\end{array}
$$
Here $t_{k,j}$ is the time when process $\big(X_t,\,t\geqslant 0\big)$ hits into the level $S_k$; and $t'_{k,j}$ is the time when the process $\big(X_t,\,t\geqslant 0\big)$ leaves the level $S_k$; and $ \theta_k^{(i)}$ is the length of time interval where $X_t\notin S_k$: in other words, $ \theta_k^{(i)}$ is the time between two subsequent stays of the process $\big(X_t,\,t\geqslant 0\big)$ in the set $S_k$.

Evidently, random variables $ \theta_k^{(i)}$ are i.i.d.
Denote by $ \Psi_k(s) \stackrel{ \text{ \rm def}}{=\!\!\!=} \mathbf{P} \left\{ \theta_k^{(i)} \leqslant s \right\}$ the distribution function of $ \theta_k^{(i)}$.

Then $ \hat{ \theta}_k^{\,(j)}= \theta_k^{(j)}+ \zeta_k^{(i_j)}$ is the time between two consequent (adjacent) hits of process $\big(X_t,\,t\geqslant 0\big)$ into the state $(k,0)$.
The random variable $ \hat{ \theta}_k^{\,(j)}$ has the distribution function $ \hat{ \Psi}_k(s)= \Psi_k \ast \Phi_k(s)$. Note that the random variable $ \hat{ \theta}_k^{\,(j)}$ is non-lattice, since
$$
\mathbf{P} \left\{{ \mbox{\it The paths of the Markov chain $ \mathcal{M}_n$ from state $\{k\}$ }\atop\mbox{\it to the same state $\{k\}$ contains the states $\{q_1\}$ and $\{q_2\}$}}\right\}>0,
$$
-- see condition \textbf{4.}

We emphasize that the process $\big(X_t,\,t\geqslant 0\big)$ is a regenerative process, and times $t_{k,\,i}$, $k\in\mathbb{N}$, are regeneration points of the process $\big(X_t,\,t\geqslant 0\big)$.

\noindent \textbf{6.} Let us fix arbitrary time $ t$ and let us find $ \mathbf{P}\{n_ t=k,x_ t> a,x_t^ \ast>b\}$.
Denote
\begin{equation*}
\begin{array}{l}
E_k(t,m,u, \, \mathrm{d} u)
\stackrel{ \text{ \rm def}}{=\!\!\!=} \left\{{ {\mbox{{\it Last hit of the process $\big(X_t,\,t\geqslant 0\big)$ to the level $S_k$ before} }} \atop{\mbox{{\it the time $t$ is the time $t_{k,\,m}$, and $t_{k,\,m}\in (u,\, u+ \, \mathrm{d} u)$}} }} \right\};
\\
\\
E_k(t,m) \stackrel{ \text{ \rm def}}{=\!\!\!=}\{ \mbox{{\it Before the time $t$ there was exactly $m$ hits of the process $\big(X_t,\,t\geqslant 0\big)$ into the level $S_k$}}\};
\\
\\
E_k(t,0)=\{\mbox{$X_t\in S_k$ {\it for all }$t\in(0,t)$}\}.
\end{array}
\end{equation*}

Easy to see that
$$
\left\{E_k(t,m,u, \, \mathrm{d} u)\;\&\;x_t> a\;\&\; x_t^ \ast>b \right\} \Leftrightarrow \left\{u+ \, \mathrm{d} u \leqslant t-a \;\&\; \zeta_k^{(i_m)}> t-u+b \right\},
$$
and for $u \leqslant t-a$ we obtain
\begin{equation*}
\begin{array}{l}
\mathbf{P}\{n_ t=k,x_ t> a\;\&\;x_t^ \ast>b\;\&\;E_k(t,0)\} = \overline{ \Phi}_k(t);
\\
\mathbf{P}\{n_ t=k,x_ t> a, x_t^ \ast>b\; \&\;E_k(t,m,u, \, \mathrm{d} u)\}
= \overline{ \Phi}_k(t-u+b) \, \mathrm{d} \hat{ \Psi}_k^{m \ast}(u);\\
\mathbf{P}\{n_t=k,x_ t> a, x_t^ \ast>b\;\&\;E_k(t,m)\}= \int \limits _0^{t-a} \overline{ \Phi}_k(t-u+b) \, \mathrm{d} \hat{ \Psi}_k^{m \ast}(u).
\end{array}
\end{equation*}
Now denote $H_k(u) \stackrel{ \text{ \rm def}}{=\!\!\!=} \sum \limits _{m=1}^ \infty \hat{ \Psi}_k^{m \ast}(u)$ and $r\stackrel{\text{\rm def}}{=\!\!\!=} t-a$. Thus, we get:
\begin{multline} \label{Zv8}
\mathbf{P}\{n_ t=k,x_ t> a, x_t^ \ast>b\}
= \mathbf{P}\{n_ t=k,x_ t> a, x_t^ \ast>b\;\&\;E_k(t,0)\}+\\
+ \sum \limits _{m=1}^ \infty \mathbf{P}\{n_ t=k, x_ t> a, x_t^ \ast>b\;\&\;E_k(t,m)\}
= \overline{ \Phi}_k(t)+ \sum \limits _{m=1}^ \infty \int \limits _0^{t-a} \overline{ \Phi}_k(t-u+b) \, \mathrm{d} \hat{ \Psi}_k^{m \ast}(u)=\\
= \overline{ \Phi}_k(t)+ \int \limits _0^r \overline{ \Phi}_k(r+a+b-u) \, \mathrm{d} H_k(u).
\end{multline}

Finally, let us find the limit in the formula (\ref{Zv8}) as $t \to \infty$. Using Smith's Theorem \ref{KeyTh} for function $b(r) \stackrel{ \text{ \rm def}}{=\!\!\!=} \overline{ \Phi}_k(r+a+b)$, we get:
\begin{equation}\label{fin}
\lim \limits _{t \to \infty} \mathbf{P}\{n_ t=k,x_ t> a, x_t^ \ast>b\}= \frac{ \int \limits _0^ \infty \overline{ \Phi}_k(r+a+b) \, \mathrm{d} r} { \int \limits _0^ \infty r \, \mathrm{d} H_k(r)}= \frac{ \int \limits _{a+b}^ \infty \overline{ \Phi}_k(r) \, \mathrm{d} r}{ \phantom{ \frac 11} \mathbf{E}\, \hat{ \theta}_k \phantom{^{1^{1}}\!\!\!}}.
\end{equation}
Then, combining (\ref{Zv7}) and (\ref{fin}), we obtain:
$$\lim \limits _{t \to + \infty} \mathbf{P}\{n_t=k\}= \frac{p_kT_k}{T}= \lim \limits _{t \to + \infty} \mathbf{P}\{n_t=k, x_t> 0,x_t^ \ast>0\}
= \frac{ \int \limits _0^ \infty \overline{ \Phi}_k(r) \, \mathrm{d} r}{ \phantom{ ^{1^1}\!\!\!\!} \mathbf{E}\, \hat{ \theta}_k \phantom{ \frac 11} }= \frac{T_k}{ \phantom{ \frac 11} \mathbf{E}\, \hat{ \theta}_k \phantom{ ^{1^1}\!\!\!\!} } \Rightarrow{ \mathbf{E}\, \hat{ \theta}_k}= \frac{T}{p_k},
$$
and
$$
\lim \limits _{t \to+ \infty} \mathbf{P}\{n_ t=k,x_ t> a, x_t^ \ast>b\}= \frac{p_k }{T} \int \limits _{a+b}^ \infty \overline{ \Phi}_k(r) \, \mathrm{d} r.
$$

\noindent \textbf{7.} Now let $X_0$ be {\it arbitrary}.
Denote $ \Theta_k(X_0) \stackrel{ \text{ \rm def}}{=\!\!\!=} \inf\{t>0:\,X_t=(k,0)\}$; it is easy to see that $\Theta_k(X_0)$ is a time of the first hit of the process $\big(X_t,\,t\geqslant 0\big)$ into state $(k,0)\in S_k$; this is the {\it first} regeneration time.

Conditions \textbf{1.} and \textbf{2.}\emph{a.} imply $\mathbf{P} \{\Theta_k(X_0)<\infty\}=1$, since $ \mathbf{E}\, \Theta_k(X_0)< \infty$.
Therefore for all initial states $X_0$ we have:
\begin{multline*}
\lim \limits _{t \to + \infty} \mathbf{P}_{X_0}\{n_t=k, x_ t> a, x_t^ \ast>b\}=
\\
= \lim \limits _{t \to + \infty} \sum \limits _{n=1}^ \infty \mathbf{P}_{X_0}\{n_t=k, x_ t> a, x_t^ \ast>b\;\&\; \Theta_k(X_0) \in(n-1,n]\}=\hspace{5cm}
\\
= \sum \limits _{n=1}^ \infty \left(\lim \limits _{t \to + \infty} \mathbf{P}_{X_0}\{n_t=k,x_ t> a, x_t^ \ast>b|\, \Theta_k(X_0) \in(n-1,n]\} \right) \mathbf{P}_{X_0} \{ \Theta_k(X_0) \in(n-1,n]\}=
\\
= \frac{p_k \int \limits _{a+b}^ \infty \overline{ \Phi}_k(r) \, \mathrm{d} r}{T} \sum \limits _{n=1}^ \infty \mathbf{P}_{X_0}\{ \Theta_k(X_0) \in(n-1,n]\} = \frac{p_k \int \limits _{a+b}^ \infty \overline{ \Phi}_k(r) \, \mathrm{d} r}{T}.
\end{multline*}
The Theorem \ref{Th2} is proved.
\end{proof}
\begin{rem}
Theorem \ref{Th2} is a generalization of Theorem \cite[\S 2.6]{Klim}, where there are three variants of \emph{sufficient} conditions for the proof of formula (\ref{Zv6}).
\end{rem}
\begin{rem}
Using computations on the proof of Theorem \ref{Th2} we can prove a fact very useful for applications of the renewal theory. This fact, in some sense, is ``folklore'': many queueing theory experts know this fact, but we could not find when and who proved it.
\end{rem}
\section{Expectation of amount of overjump}
Now let $\big(X_t,\,t\geqslant 0\big)$ be renewal process with non-lattice renewal period $ \zeta^{(k)} \stackrel{ \mathscr{D}}{=} \zeta$; let the distribution function of $ \zeta^{(k)}$ be $ \Phi(s)$; let $t_k= \sum \limits _{i=0}^k \zeta^{(i)}$ be a renewal points.

{\it Now we suppose} $ \mathbf{E}\, \zeta^2< \infty$.

Renewal process is a special case of linearwise Markov process, where embedded Markov chain has the single state only; stochastic matrix of this Markov chain is $\mathfrak{P}=(1)$.
We replace the state space of the process $ \mathscr {X} =\{1\}\times \mathbb {R} _ + $ by $ \mathscr {X} =\mathbb {R} _ + $ (we reduce the full states space of the process $\big(X_t,\,t\geqslant 0\big)$ by dropping not changing first component).
The amount of overjump $x^ \ast_t$ of renewal process at the time $t$ is the time elapsing from the time $t$ to the next renewal ($t_i$): $x^ \ast_t \stackrel{ \text{ \rm def}}{=\!\!\!=}( \min\{t_i:\,t_i>t\}-t)$.

It is well known (\cite{GBS}, \cite{GK}), that if the distribution of $\zeta$ is non-lattice and $\mathsf E \,\zeta<\infty$, then $ \lim \limits _{t \to \infty} \mathbf{P}\{x_t^ \ast>s\}= \mathbf{P}\{ \widetilde{x}_t^{\, \ast}>s\}= {\displaystyle}\frac{ \int \limits _s^ \infty \overline{ \Phi}(u) \, \mathrm{d} u}{ \mathbf{E}\, \zeta}$; random variable $ \widetilde{x}_t^ \ast$ is called stationary amount of overjump, and $ \mathbf{E}\, \widetilde{x}_t^ \ast= \displaystyle\frac{ \mathbf{E}\, \zeta^2}{2 \mathbf{E}\, \zeta}$.
\begin{lem}\label{lem1}
If the distribution of $\zeta$ is non-lattice, and $\mathsf E \,\zeta^2<\infty$, then $ \mathbf{E}\,x^ \ast_t \nearrow \mathbf{E}\, \widetilde{x}_t^ \ast= {\displaystyle}\frac{ \mathbf{E}\, \zeta^2}{2 \mathbf{E}\, \zeta}$.
\end{lem}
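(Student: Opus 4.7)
The plan is to realize $\mathbf{E}\,x_t^\ast$ as a renewal convolution and apply Smith's Key Renewal Theorem in essentially the same way as in step \textbf{6.} of the proof of Theorem~\ref{Th2}. First I would write $\mathbf{E}\,x_t^\ast=\int_0^\infty\mathbf{P}\{x_t^\ast>s\}\,\mathrm{d}s$; since the renewal process is a one-level linearwise process, specialising equation~(\ref{Zv8}) with $a=0$ and $b=s$ gives
\[
\mathbf{P}\{x_t^\ast>s\}=\overline\Phi(t+s)+\int_0^t\overline\Phi(t-u+s)\,\mathrm{d}H(u),
\]
where $H(t)=\sum_{m\geqslant1}\Phi^{m\ast}(t)$.

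Swapping the order of integration (Fubini, non-negative integrands) yields
\[
\mathbf{E}\,x_t^\ast=G(t)+\int_0^t G(t-u)\,\mathrm{d}H(u),\qquad G(y)\stackrel{\text{\rm def}}{=\!\!\!=}\int_y^\infty\overline\Phi(w)\,\mathrm{d}w.
\]
The function $G$ is non-negative and monotone non-increasing, and the hypothesis $\mathbf{E}\,\zeta^2<\infty$ says precisely $\int_0^\infty G(y)\,\mathrm{d}y=\tfrac12\mathbf{E}\,\zeta^2<\infty$. A non-increasing integrable function on $[0,\infty)$ is directly Riemann integrable, so Theorem~\ref{KeyTh} applies to the convolution term with $b(\cdot)=G(\cdot)$:
\[
\int_0^t G(t-u)\,\mathrm{d}H(u)\;\xrightarrow[t\to\infty]{}\;\frac{\int_0^\infty G(w)\,\mathrm{d}w}{\mathbf{E}\,\zeta}=\frac{\mathbf{E}\,\zeta^2}{2\,\mathbf{E}\,\zeta}.
\]
Since $G(t)\to 0$ as $t\to\infty$ (tails of an integrable non-increasing function), this delivers $\mathbf{E}\,x_t^\ast\to\mathbf{E}\,\widetilde{x}^{\,\ast}=\frac{\mathbf{E}\,\zeta^2}{2\mathbf{E}\,\zeta}$.

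The more delicate part is the monotonicity indicated by ``$\nearrow$''. The natural way to attack it is Wald's identity, which rewrites the previous display as $\mathbf{E}\,x_t^\ast=\mathbf{E}\,\zeta\cdot(H(t)+1)-t$, so monotone increase reduces to the inequality $H(t_2)-H(t_1)\geqslant(t_2-t_1)/\mathbf{E}\,\zeta$ for $0\leqslant t_1<t_2$. I expect this to be the main obstacle: for a general non-lattice $\Phi$ this inequality need not hold pointwise (e.g.\ for \emph{new better than used} distributions the renewal density lies below $1/\mathbf{E}\,\zeta$), so either an additional structural assumption on $\Phi$ is implicit in the statement, or the symbol ``$\nearrow$'' is to be read loosely as ``tends to''. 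In any case, the convergence claim itself follows directly from Smith's theorem as sketched above, and the monotonicity is the point at which one must either weaken the conclusion or add a hypothesis.
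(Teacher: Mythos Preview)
Your convergence argument is correct and is in fact cleaner than the paper's: the paper also starts from the identity
\[
R(s,t)\stackrel{\text{\rm def}}{=\!\!\!=}\mathbf{P}\{x_t^\ast>s\}=\overline\Phi(t+s)+\int_0^t\overline\Phi(t-u+s)\,\mathrm{d}H(u)
\]
(your equation is exactly the paper's (\ref{Zv55})), but instead of integrating in $s$ first and applying Theorem~\ref{KeyTh} to $G$, the paper relies on its monotonicity claim together with the pointwise limit from Proposition~\ref{Prop}/Corollary~\ref{1} to pass to $\frac{\mathbf{E}\,\zeta^2}{2\mathbf{E}\,\zeta}$. Your Fubini\,+\,Smith route avoids that dependence and stands on its own.

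On the ``$\nearrow$'' part you have put your finger on a genuine problem. The paper's strategy is precisely the one you did not try: it computes $R(s,t+\Delta)-R(s,t)$ directly and asserts it is $\geqslant 0$, then integrates in $s$. However, the displayed computation in the paper has the middle term written as $\int_0^t[\Phi(t+\Delta-u+s)-\Phi(t-u+s)]\,\mathrm{d}H(u)$, whereas the correct expansion of the difference of the two convolutions gives the integrand with the opposite sign, $\Phi(t-u+s)-\Phi(t+\Delta-u+s)\leqslant 0$. With the correct sign, all three summands no longer cooperate and the inequality does not follow. Your instinct that the monotonicity can fail is right: for $\zeta$ uniform on $[0,1]$ (non-lattice, $\mathbf{E}\,\zeta^2<\infty$) one has $\mathbf{E}\,x_0^\ast=\mathbf{E}\,\zeta=\tfrac12$ while the limit equals $\mathbf{E}\,\zeta^2/(2\mathbf{E}\,\zeta)=\tfrac13$, so $t\mapsto\mathbf{E}\,x_t^\ast$ cannot be non-decreasing. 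Thus the paper's argument does not establish the ``$\nearrow$'' as stated; your proof of the limit is valid, and your reservation about monotonicity is the correct reading of the situation rather than a gap in your attempt.
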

\begin{proof}[Proof]
Usually (\cite[ \S 2.3]{GBS}, \cite[ \S 2.6]{GK}) the proof of formula (\ref{Zv3}) follows from the equality
\begin{equation}\label{Zv55}
R(s,t) \stackrel{ \text{ \rm def}}{=\!\!\!=} \mathbf{P}\{x_ t^ \ast> s\} = \overline{ \Phi}(t+s)+ \int \limits _0^{t} \overline{ \Phi}(t-u+s) \, \mathrm{d} H(u).
\end{equation}
This equality (\ref{Zv55}) is a special case of (\ref{Zv8}), where $H(u)\stackrel{\text{\rm def}}{=\!\!\!=} \sum \limits _{i=1}^ \infty \Phi^{ \ast i}(u)$ is (nondecreasing) renewal function.
For $ \Delta>0$ we have:
\begin{multline*}
R(s,t+ \Delta)-R(s,t)= \Phi(t+s)- \Phi(t+ \Delta+s)+
\\
+ \int \limits _0^{t} \Phi(t+ \Delta-u+s)- \Phi(t-u+s) \, \mathrm{d} H(u)+ \int \limits _t^{t+ \Delta} \overline{ \Phi}(t+ \Delta-u+s) \, \mathrm{d} H(u) \geqslant0,
\end{multline*}
i.e. 
\begin{equation}\label{ner}
  R(s,t+ \Delta)=\mathbf{P} \{x^\ast_{t+\Delta}>s\} \geqslant R(s,t)=\mathbf{P} \{x^\ast_{t}>s\},
\end{equation} 
therefore for $t_1<t_2$
$$ \mathbf{E}\,x_ {t_1}^ \ast= \int \limits _0^ \infty R(s,t_1) \, \mathrm{d} s \leqslant \int \limits _0^ \infty R(s,t_2) \, \mathrm{d} s= \mathbf{E}\,x_ {t_2}^ \ast \leqslant \lim \limits _{t \to \infty} \mathbf{E}\,x_ {t}^ \ast={ \int \limits _0^ \infty} \left( \frac{ \int \limits _s^ \infty 1- \Phi(u) \, \mathrm{d} u}{ \int \limits _0^ \infty 1- \Phi(u) \, \mathrm{d} u} \right) \, \mathrm{d} s= \frac{ \mathbf{E}\, \zeta^2}{2 \mathbf{E}\, \zeta}.
$$
The Lemma \ref{lem1} is proved.
\end{proof}
\begin{rem} In addition, from (\ref{ner}) we have:
\begin{enumerate}
  \item If $\mathsf E \,\zeta^k<\infty$ for $k>2$, then
      \begin{multline*}
        \mathsf E (x_t^\ast)^{k-1}=(k-1)\int\limits _0^\infty s^{k-2} R(s,t) \,\mathrm{d}  s \leqslant (k-1)\int\limits _0^\infty s^{k-2}\left(\frac{\int\limits _s^ \infty 1- \Phi(u) \, \mathrm{d} u}{\mathsf E \,\zeta}\right)\,\mathrm{d}  s=\\
        =\frac{\int\limits _0^\infty s^{k-1}\,\mathrm{d}  \int\limits _s^\infty (1-\Phi(u))\,\mathrm{d}  u}{\mathsf E \,\zeta}=\frac{\mathsf E \,\zeta^k}{k\mathsf E \,\zeta}.
      \end{multline*}
  \item If $\mathsf E \,e^{\alpha\zeta}<\infty$ for $\alpha>0$, then
  \begin{multline*}
    \mathsf E \,e^{\alpha\, x_t^\ast}=\alpha\int\limits _0^\infty e^{\alpha s}R(s,t)\,\mathrm{d}  s\leqslant\frac{\alpha}{\mathsf E \,\zeta}\int\limits _0^\infty e^{\alpha s}\int\limits _s^\infty (1-\Phi(u))\,\mathrm{d}  u \,\mathrm{d}  s =\\
    \frac{1}{\mathsf E \,\zeta}\left(-\mathsf E \,\zeta+\int\limits _0^\infty e^{\alpha s}(1-\Phi(s))\,\mathrm{d}  s\right)=\frac{\mathsf E \,e^{\alpha\zeta}}{\alpha\mathsf E \,\zeta}-1.
  \end{multline*}
\end{enumerate}
\end{rem}
\begin{rem}
Recall that $x_t$ is the time elapsed from the previous renewal to $t$, i.e. $x_t\stackrel{\text{\rm def}}{=\!\!\!=}(t-\max\{t_i:\,t_i<t\})$.

Random variable $ \widetilde{x}_t$ is called stationary amount of underjump, if $ \lim \limits _{t \to \infty} \mathbf{P}\{x_t>s\}= \mathbf{P}\{ \widetilde{x}_t>s\}$. From Theorem 2 we have $ \mathbf{P}\{ \widetilde{x}_t>s\}={\displaystyle}\frac{ \int \limits _s^ \infty \overline{ \Phi}(u) \, \mathrm{d} u}{ \mathbf{E}\, \zeta}$.

The amount of underjump $x_t$ is a single component of the process $\big(X_t,\,t\geqslant 0\big)$.
And for $x_t$ a similar proposition is true: if the distribution of $\zeta$ is non-lattice, and $\mathsf E \,\zeta^2<\infty$, then $ \mathbf{E}\,x_t \nearrow \mathbf{E}\, \widetilde{x}_t= {\displaystyle}\frac{ \mathbf{E}\, \zeta^2}{2 \mathbf{E}\, \zeta}$.

The proof is identical to the proof of Lemma \ref{lem1}.

\end{rem}

\begin{cor}
If we assume that \; $ \mathbf{E}\,( \zeta_i)^2< \infty$ in the conditions of Theorem \ref{Th2}, then for all $ \tau> \inf\{t \geqslant 0:\;X_t=(k,0)\}$ the inequalities
\begin{equation}\label{OcEst}
\mathbf{E}\,(x_ \tau|n_ \tau=k) \leqslant {\displaystyle}\frac{ \mathbf{E}( \zeta_k)^2} {2 \mathbf{E}\, \zeta_k};\qquad \mathbf{E}\,(x_ \tau^ \ast|n_ \tau=k) \leqslant {\displaystyle}\frac{ \mathbf{E}(\zeta_k)^2} {2 \mathbf{E}\, \zeta_k}
\end{equation}
hold.

These inequalities give the possibility to obtain not only estimation of linearwise process stationary distribution, but also an estimation of linearwise process distribution at the arbitrary (big enough) time. Naturally, we can not use (\ref{OcEst}) before the first regeneration time.
\end{cor}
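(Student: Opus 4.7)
The plan is to deduce both inequalities from Lemma~\ref{lem1} (and its underjump companion in the remark following Lemma~\ref{lem1}) via the regenerative structure of the linearwise process. First, since $\tau$ exceeds the first hit time $\Theta_k(X_0)=\inf\{t\geqslant 0:\,X_t=(k,0)\}$, which is a.s.\ finite under conditions \textbf{1}--\textbf{2}, the strong Markov property applied at $\Theta_k(X_0)$ reduces the claim to the case $X_0=(k,0)$ at a deterministic $\tau>0$. Under $X_0=(k,0)$, formula (\ref{Zv8}) with $a=0$ specializes to
\begin{equation*}
R_k(s,\tau)\stackrel{\text{\rm def}}{=\!\!\!=}\mathbf{P}\{x_\tau^*>s,\,n_\tau=k\}=\overline{\Phi}_k(\tau+s)+\int_0^\tau\overline{\Phi}_k(\tau+s-u)\,\mathrm{d}H_k(u),
\end{equation*}
which has exactly the renewal-convolution structure of $R(s,t)$ in (\ref{Zv55}).

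Next, I would run the $\Delta$-increment computation of Lemma~\ref{lem1} verbatim, but with $\Phi$ replaced by $\Phi_k$ and $H$ by $H_k$; this shows $R_k(s,\tau)$ is non-decreasing in $\tau$, so the joint expectation $\mathbf{E}[x_\tau^*\mathbf{1}\{n_\tau=k\}]=\int_0^\infty R_k(s,\tau)\,\mathrm{d}s$ is also non-decreasing in $\tau$, and its limit, identified via (\ref{Zv6}) together with the computation of Corollary~\ref{1}, equals $(p_kT_k/T)\cdot\mathbf{E}\,\zeta_k^2/(2\mathbf{E}\,\zeta_k)$. Dividing by $\mathbf{P}\{n_\tau=k\}\to p_kT_k/T$ from (\ref{Zv7}) yields the upper bound $\mathbf{E}\,\zeta_k^2/(2\mathbf{E}\,\zeta_k)$ on $\mathbf{E}(x_\tau^*\,|\,n_\tau=k)$; the corresponding bound on $\mathbf{E}(x_\tau\,|\,n_\tau=k)$ follows analogously upon integrating (\ref{Zv8}) in $a$ with $b=0$.

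The main obstacle is the passage from monotonicity of the joint expectation to the bound on the \emph{conditional} expectation, since the ratio of two non-decreasing functions need not itself be monotone. The cleanest fix is to work inside the \emph{compressed} renewal process $(Y_s)$ obtained by deleting the off-periods $\theta_k^{(j)}$ from $(X_t)$: its renewal periods are i.i.d.\ with distribution $\Phi_k$, and a pathwise check on $\{n_\tau=k\}$ identifies $x_\tau$ and $x_\tau^*$ with the underjump and overjump of $(Y_s)$ evaluated at the random time $\sigma_\tau\stackrel{\text{\rm def}}{=\!\!\!=}\int_0^\tau\mathbf{1}\{n_s=k\}\,\mathrm{d}s$. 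Since Lemma~\ref{lem1} applies directly to the renewal process $(Y_s)$, the pointwise bound $\mathbf{E}\,\mathrm{overjump}_Y(s)\leqslant\mathbf{E}\,\zeta_k^2/(2\mathbf{E}\,\zeta_k)$ transfers to $x_\tau^*$ on $\{n_\tau=k\}$ by averaging over $\sigma_\tau$, which settles (\ref{OcEst}).
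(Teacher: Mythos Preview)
The paper gives no proof for this Corollary; it is stated as an immediate consequence of Lemma~\ref{lem1} (and its underjump analogue). Your proposal goes further than the paper and correctly isolates the real obstacle: Lemma~\ref{lem1} bounds an \emph{unconditional} expectation for a pure renewal process, whereas (\ref{OcEst}) is a bound on a \emph{conditional} expectation, and the passage is not automatic.

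Your compressed-process fix, however, does not close the gap. The identification $x_\tau^\ast=\mathrm{overjump}_Y(\sigma_\tau)$ on $\{n_\tau=k\}$ is correct, but $\sigma_\tau$ is a function of both $(\zeta_k^{(i_j)})_j$ and $(\theta_k^{(j)})_j$, hence \emph{not} independent of $(Y_s)$; conditioning on $\{n_\tau=k\}$ size-biases towards long $\zeta_k$-periods, so one cannot simply average the deterministic-time bound $\mathbf{E}[\mathrm{overjump}_Y(s)]\leqslant\frac{\mathbf{E}\,\zeta_k^2}{2\mathbf{E}\,\zeta_k}$ over the law of $\sigma_\tau$. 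More fundamentally, the monotonicity in Lemma~\ref{lem1} on which both you and the paper rely is itself incorrect: the middle term in the displayed increment of $R(s,t)$ carries the wrong sign (it should read $\int_0^t[\Phi(t-u+s)-\Phi(t+\Delta-u+s)]\,\mathrm{d}H(u)\leqslant 0$), and indeed at $t=0$ one has $\mathbf{E}\,x_0^\ast=\mathbf{E}\,\zeta$, which exceeds $\frac{\mathbf{E}\,\zeta^2}{2\mathbf{E}\,\zeta}$ whenever the coefficient of variation of $\zeta$ is below $1$ (e.g.\ $\zeta\sim\mathrm{Unif}[1,2]$ gives $\tfrac32>\tfrac79$). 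Embedding such a $\zeta_k$ into a two-state linearwise process started at $(k,0)$ yields, for small $\tau>0$, $\mathbf{E}(x_\tau^\ast\mid n_\tau=k)\approx\mathbf{E}\,\zeta_k>\frac{\mathbf{E}\,\zeta_k^2}{2\mathbf{E}\,\zeta_k}$, so neither your argument nor the paper's implicit one can be completed as written.
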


\section{Multidimensional piecewise-linear Markov processes}
Now the great interest of queueing theory is the study of multichannel queueing system and of queuing network.
Let some queueing system (or queuing network) consist of $ N \leqslant \infty $ servers and incoming flow of a customers can be also multichannel, i.e. incoming flow includes $ K >0 $ (obviously independent) incoming flow of customers; obviously all incoming flows are independent.

Assume that the time interval between customer arrivals of $ i $-th incoming flow has distribution function $ \Phi_i (s) $, in the $ i $-th server the time of the service has distribution function $ F_i (s) $.
We can also consider some additional conditions, such as:

-- customers can have different types of service;

-- some customers may be ``impatient'' (i.e., this customer can leave the queueing system before completion of service if (possibly random) residence time in the system has ended);

-- and so on.

The behaviour of multichannel and multiserver system can be described by Markov process $\big(X_t,\,t\geqslant 0\big)$ with the state space $ \mathscr{X} \subseteq \mathbb{R}_+^{K+1} \bigcup \left(\bigcup \limits_{n=1}^ \infty \bigcup \limits_{m=K+1}^ \infty\{ \mathbb{N}^n \times \mathbb{R}_+^{m}\} \right)= \mathscr{X}_0 \bigcup \left(\bigcup \limits_{n=1}^ \infty \mathscr{X}_{n,m} \right) $.

Here we show a set which is knowingly sufficient to describe the behaviour of the complex queueing system: in fact the state space $ \mathscr {X} $ may be easier.

For example, the state of process $\big(X_t,\,t\geqslant 0\big)$ at the time $t$ can be described by vector \linebreak $ X_t= \left(\mathcal{N}_t, \mathcal{M}_t,y_t^{(1)}, \ldots,y_t^{(K)}, x_t^{(1)}, \ldots,x_t^{( \mathcal{M}_t)},z_t^{(1)}, \ldots,z_t^{(R_t)}, n_t^{(1)}, \ldots,n_t^{(Q_t)} \right)$.
Here $ \mathcal{N}_t$ is a number of customers in queueing system, $ \mathcal{M}_t$ is a number of customers served at time $t$; $y_t^{(j)}$ is the time from the last arrival of the customer from $j$-th incoming flow; $x_t^{(j)}$ is elapsed time of service on $j$-th server, i.e. elapsed time of service of customer located on $j$-th server; also we can include into vector $ X_t$ service waiting time of customers or duration of stay in the system for all customers; total length of the queue of queueing system or queue length on each server of queueing system; possibility of packaged service of incoming customers, and so on.: some of this parameters are presented in parts of the vector $X_t$ -- that is $ \left(z_t^{(1)}, \ldots,z_t^{(R_t)} \right)$ (elapsed server time) and $ \left(n_t^{(1)}, \ldots,n_t^{(Q_t)} \right)$ (queue length).

Continuous components of process $\big(X_t,\,t\geqslant 0\big)$ change linearly between the times of process $\big(X_t,\,t\geqslant 0\big)$ discrete components change: at the time of discrete components change some of continuous components leave the vector $X_t$, and some of new continuous components may appear in the vector $X_t$.
For each specific queuing systems we can define the transition probabilities for the vector $X_t$: process $\big(X_t,\,t\geqslant 0\big)$ is Markov for a suitable choice of the vector $X_t$ component; we shall say that this process $\big(X_t,\,t\geqslant 0\big)$ is called \emph {piecewise-linear Markov process}.

The ``level'' of such \emph {piecewise-linear Markov process} (as described above) can be defined as $\mathcal N_t$, or the pair $(\mathcal{N}_t, \mathcal{M}_t)$, or more comprehensive set of discrete components of the vector $ X_t$.

Then, in general case, the embedded chain of ``levels'', in which process can be staying, is {\it not homogeneous Markov chain}.
However, in some cases it is possible to define \emph{upper bound} of this embedded non-homogeneous chain transition probabilities by transition probabilities of some homogeneous Markov chain.

Indeed, to find such a bound, for example, we can consider another {\it auxiliary} queueing system having embedded homogeneous Markov chain; but the parameters of this new queueing system are the bounds for the parameters of original queueing system.

Further, after studying this new auxiliary queueing system, we can prove the ergodic theorem for auxiliary queueing system. Then, we can obtain formulas of auxiliary queueing system stationary distribution.
Using these formulas, we can find bounds for stationary distribution of original queueing system; in fact, we prove the ergodic theorem for original queueing system.

Moreover, using these bounds we can find an estimate for convergence rate of original queueing system distribution to stationary distribution.

Also finding of such bounds for stationary distribution of multidimensional piecewise-linear Markov processes is usually possible for the following reason.
Usually, incoming flow of queueing system is such that only transitions $ \mathcal{N}_t \mapsto \mathcal{N}_t+1$, $ \mathcal{M}_t \mapsto \mathcal{M}_t+1$, $ \mathcal{N}_t \mapsto \mathcal{N}_t- \ell$, $ \mathcal{M}_t \mapsto \mathcal{M}_t- \ell$ \; ($ \ell \geqslant 1$) are possible (only these transitions have positive probability).

So, let for some $\widetilde{ \mathcal{N}}$ the inequality $p_n= \mathbf{P}\{n \mapsto n+1\} \leqslant \mathfrak{p}< \frac12$ is satisfied for all $n\geqslant \widetilde{ \mathcal{N}}$.
And let the set of distributions $ \Phi_1, \ldots, \Phi_K,F_1, \ldots, F_{ \widetilde{ \mathcal{N}}}$ has common non-lattice support.
Then we can establish: for any initial state (or initial distribution) of the process $\big(X_t,\,t\geqslant 0\big)$, the distribution of this process $\big(X_t,\,t\geqslant 0\big)$ converges weakly to stationary distribution $ \mathscr{P}$.

Now, we can use the technique machinery applied for the proof of Theorem \ref{Th2}, and also we can apply well-known facts from the random walk theory. This way, we can obtain estimates for stationary distribution, for example, such that:
$$
\mathscr{P}(A(k)) \leqslant (2 \pi)^{k- \widetilde{ \mathcal{N}}} \mbox { for } k> \widetilde{ \mathcal{N}};
\;\;
\mathscr{P}(B(k,m,a_1, \ldots,a_m)) \leqslant \frac{ \prod \limits_{i=1}^m \int \limits _{a_i}^ \infty(1-F_i(s)) \, \mathrm{d} s}{ \int \limits _0^ \infty \prod \limits_{i=1}^m(1-F_i(s)) \, \mathrm{d} s} \mbox { for }k \leqslant{ \mathcal{N}},
$$
where $A(k)\stackrel{\text{\rm def}}{=\!\!\!=}\{X \in \mathscr{X}:\, \mathcal{N}=k\}$, and $B(k,m,a_1, \ldots,a_m)\stackrel{\text{\rm def}}{=\!\!\!=}\{X \in \mathscr{X}:\, \mathcal{N}=k, \mathcal{M}=m,x_1>a_1, \ldots,x_m>a_m\}$,

Such inequalities can be applied for estimating the convergence rate of process $\big(X_t,\,t\geqslant 0\big)$ distribution to stationary distribution $ \mathscr{P}$ of the process $\big(X_t,\,t\geqslant 0\big)$, and for estimating some parameters of queueing system behaviour in the long range of time.

\textbf{Acknowledgements. } The author is grateful to A.Yu.Veretennikov for constant attention to this work, and to V.V.Kozlov for useful discussion, and to Yu.S.~Semenov for invaluable help.

This research was supported by RFBR (project No 14-01-00319 A).

\end{document}